


\documentclass{amsart}


\setlength{\textheight}{43pc}
\setlength{\textwidth}{28pc}





\usepackage{amssymb}
\usepackage{url} 

%



\newtheorem{theorem}{Theorem}

\newtheorem{lemma}[theorem]{Lemma}

\theoremstyle{definition}
\newtheorem{definition}[theorem]{Definition}
\newtheorem{definition and remark}[theorem]{Definition and Remark}
\newtheorem{remark and definition}[theorem]{Remark and Definition}

\newtheorem{remark}[theorem]{Remark}

\newtheorem{notation and remark}[theorem]{Notation and Remark}


\newcommand{\Gm}{{\mathfrak m}}

\newcommand{\Ga}{{\mathfrak a}}

\newcommand{\Gp}{{\mathfrak p}}
\newcommand{\longto}{\rightarrow}

\newcommand{\Max}{\text{\rm Max}}
\newcommand{\F}{\mathcal{F}}

\newcommand\Q{{\mathbb Q}}

\newfont{\bg}{cmr10 scaled\magstep4}
\newcommand{\bigzerol}{\smash{\hbox{\bg 0}}}
\newcommand{\bigzerou}{\smash{\lower1.7ex\hbox{\bg 0}}}


\begin{document}







\title[ ]
{The strong Lefschetz property for \\ complete intersections 
defined by \\ products of linear forms}


%

\author{Tadahito Harima}
\address{Niigata University \\ Department of Mathematics Education, Niigata, 950-2181 Japan}
\email{harima@ed.niigata-u.ac.jp}
\thanks{This work was supported by JSPS KAKENHI Grant Number 15K04812.}

\author{Akihito Wachi}
\address{Hokkaido University of Education \\  Department of Mathematics, Kushiro, 085-8580 Japan}
\email{wachi.akihito@k.hokkyodai.ac.jp}

\author{Junzo Watanabe}
\address{Tokai University \\ Department of Mathematics, Hiratsuka, 259-1292 Japan}
\email{watanabe.junzo@tokai-u.jp}


%


\subjclass[2010]{Primary 13C40; Secondary 13E10}

%

\begin{abstract}
We prove the strong Lefschetz property 
for certain complete intersections defined by products of linear forms, 
using a characterization of the strong Lefschetz property in terms of central simple modules. 
\end{abstract}

\maketitle




\section{introduction}

Let $K$ be a field 
and let $A = \bigoplus_{i=0}^c A_i$ be 
a standard graded Artinian $K$-algebra with $A_c \neq (0)$. 
We say that 
$A$ has the {\em strong Lefschetz property} (SLP) 
if there exists a linear form $z\in A_1$ 
such that the multiplication map 
$\times z^d: A_i \rightarrow A_{i+d}$ has full rank 
for all $1\leq d \leq c-1$ and $0\leq i \leq c-d$. 
We call $z\in A_1$ with this property a strong Lefschetz (SL) element. 
It is a long standing conjecture that 
every Artinian complete intersection has the SLP 
(cf.  \cite{HMMNWW} Conjecture 3.46,  \cite{HMNW} Theorem 2.3, 
\cite{HW0} Remark 20, \cite{I} Remark (P. 67), 
\cite{MN} Question 3.1, \cite{W} Example 3.9).

Let $\F$ be the family of Artinian complete intersections 
defined by products of linear forms. 
It seems noteworthy that for any member $A \in \F$, 
there exist $A', A'' \in \F$ such that 
$A$ can be expressed as an extension of  $A'$ and $A''$, 
namely we have the exact sequence:
\[
0 \rightarrow A' \rightarrow A \rightarrow A'' \rightarrow 0. 
\]
Abedelfatah \cite{A} used this fact inductively to prove that 
all members of $\F$ satisfy the conditions of the EGH conjecture. 

In this paper, suggested by Abedelfatah's proof, 
we prove that for $A \in \F$, 
if each linear factor in the generators is sufficiently general, 
the algebra $A$ has the SLP. 
Our tool is the central simple modules for Artinian algebras 
introduced in Harima-Watanabe \cite{HW1}. 
For reader's convenience 
we recall basic facts of the central simple modules in Section 2. 
The first result is Theorem~\ref{th1} in Section 3, 
which is a generalization of Theorem 1.2 in \cite{KMMW}. 
We give a new proof using a basic property of central simple modules. 
The second result is Theorem~\ref{th2} in Section 4. 
In this theorem, 
we prove the SLP for certain complete intersections 
defined by products of general linear forms. 
This is an answer to Problem 4.5 posed in \cite{KMMW}. 
It is possible to express the SLP of an Artinian algebra 
in terms of the Jordan type of the multiplication map by a general element. 
This is discussed in Section 4 
before we start proving Theorem~\ref{th2}. 

Throughout this paper, 
$K$ denotes a field of characteristic zero 
and $R=K[x_1,x_2,\ldots,x_n]$ denotes the polynomial ring 
over $K$ in $n$ variables with $\deg x_i = 1$.

\section{Central simple modules}

We recall the notion of central simple modules 
for a standard graded Artinian $K$-algebra 
and review a characterization of the SLP in terms of central simple modules. 
For details we refer the reader to \cite{HW0}, \cite{HW1}, \cite{HW2} and \cite{HW3}.

\begin{definition}
Let $A$ be a standard graded Artinian $K$-algebra 
and let $z$ be a linear form of $A$. 
Let $p$ be the least integer such that $0:z^p=A$. 
Then we have a descending chain of ideals in $A$: 
\[
A=(0:z^p)+(z) \supset (0:z^{p-1})+(z) \supset 
\cdots \supset (0:z)+(z) \supset (0:z^0)+(z)=(z). 
\]
From among the sequence of successive quotients 
\[
((0:z^{p-i})+(z))/((0:z^{p-i-1})+(z))
\]
for $0 \leq i \leq p-1$, 
pick the non-zero spaces and rename them 
\[
U_1, U_2, \ldots, U_s. 
\]
Note that $U_1=A/(0:z^{p-1})+(z)$. 
We call the graded $A$-module $U_i$ 
the {\em $i$-th central simple module of $(A,z)$}. 
\end{definition}

\begin{remark}\label{Re_csm} 
With the same notation as above, 
let $q$ be the least integer such that $((0:z^{q})+(z)) \neq (z)$. 
Then we have the following by the definition. 
\begin{itemize}
\item[(1)] 
$U_s=((0:z^{q})+(z))/(z)$. 
\item[(2)] 
If $(0:z^q)=A$, 
then $(A,z)$ has only one central simple module which is isomorphic to $A/zA$. 
\item[(3)] 
If $s>1$, 
then it is possible to regard $U_1,\ldots,U_{s-1}$ 
as the full set of the central simple modules of $(A/(0:z^q), \overline{z})$, 
where $\overline{z}$ is the image of $z$ in $A/(0:z^q)$. 
\end{itemize}
\end{remark}

\begin{definition}
Let $A$ be a standard graded Artinian $K$-algebra 
and let $V=\bigoplus_{i=a}^{b} V_i$ be a finite graded $A$-module with $V_a\neq (0)$ and $V_b\neq (0)$. 
Then, 
we say that $V$ has the SLP as an $A$-module 
if there exists a linear form $z$ of $A$ 
such that the multiplication map $\times z^d: V_i \rightarrow V_{i+d}$ 
has full rank for all $1\leq d\leq b-a$ and $a\leq i\leq b-d$.  
\end{definition}

\begin{theorem}[\cite{HW1} Theorem 1.2] \label{csm}
Let $A$ be a standard graded Artinian Gorenstein $K$-algebra. 
The following conditions are equivalent. 
\begin{itemize}
\item[(i)] 
$A$ has the SLP. 
\item[(ii)] 
There exists a linear form $z$ of $A$ such that 
all the central simple modules of $(A,z)$ have the SLP. 
\end{itemize}
\end{theorem}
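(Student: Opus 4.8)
The plan is to prove the two implications separately. The forward implication $(i)\Rightarrow(ii)$ is elementary; the reverse $(ii)\Rightarrow(i)$ is the substantial part. Throughout, for a linear form $z$ I shall use the standard dictionary between the central simple modules of $(A,z)$ and the Jordan block decomposition of the nilpotent operator ``$\times z$'' on $A$: one chooses a homogeneous $K[z]$-basis of $A$ that is a disjoint union of strings $v,zv,\dots,z^{\,\ell-1}v$ (with $z^{\,\ell}v=0$), and comparing this with the filtration by $(0:z^m)+(z)$ shows that the $i$-th central simple module $U_i$ is, as a graded vector space, the span of the bottom elements of all strings of a fixed length $\ell_i$ (with $\ell_1>\ell_2>\dots>\ell_s$). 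Moreover $z$ annihilates each $U_i$, so $U_i$ is a graded module over $\bar A:=A/zA$, and $A\cong\bigoplus_i U_i\otimes_K K[z]/(z^{\ell_i})$ as graded $K[z]$-modules (but in general not as $A$-modules, which is the source of the difficulty below).

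For $(i)\Rightarrow(ii)$: let $z$ be a strong Lefschetz element and $c$ the socle degree of $A$. Since $A$ is Gorenstein, $\dim_K A_j=\dim_K A_{c-j}$, so the full-rank maps $\times z^{\,c-2j}\colon A_j\to A_{c-j}$ ($j\le c/2$) are isomorphisms. Their injectivity, together with the fact (again a consequence of Gorenstein duality) that the multiset of Jordan string positions is invariant under the reflection $[a,b]\mapsto[c-b,c-a]$, forces every string to be symmetric about $c/2$: a string of length $\ell$ occupies exactly the degrees $d,d+1,\dots,c-d$. Hence all strings of a given length share one bottom degree, each $U_i$ is concentrated in a single degree, and therefore has the SLP trivially. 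This yields $(ii)$.

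For $(ii)\Rightarrow(i)$: suppose every central simple module of $(A,z)$ has the SLP. Since $K$ is infinite we may fix a single $w\in A_1$ whose image $\bar w\in\bar A_1$ is simultaneously a Lefschetz element of all the $U_i$. I would then show that a general linear combination $z+\lambda w$ is a strong Lefschetz element of $A$, arguing by induction on the number $s$ of central simple modules and using Remark~\ref{Re_csm}. When $s>1$, the algebra $\tilde A:=A/(0:z^q)$ (with $q$ as in Remark~\ref{Re_csm}) is again Artinian Gorenstein: indeed $z^q\ne0$ in $A$, so by the Gorenstein pairing $\times z^q\colon A_{c-q}\to A_c$ is nonzero, whence $\tilde A\cong z^qA$ (up to a degree shift by $q$) has one-dimensional socle and is Gorenstein of socle degree $c-q$. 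By Remark~\ref{Re_csm}(3) the central simple modules of $(\tilde A,\bar z)$ are $U_1,\dots,U_{s-1}$, which have the SLP, so $\tilde A$ has the SLP by induction. One then reassembles $A$ from $\tilde A$ and the bottom central simple module $U_s=((0:z^q)+(z))/(z)$ along the filtration $(z)\subset\dots\subset A$ of the definition, using Gorenstein duality to control the Hilbert-function bookkeeping among the pieces, and concludes that $z+\lambda w$ has full-rank powers on all of $A$. The base case $s=1$ is where $A\cong U_1\otimes_K K[z]/(z^p)$; here one computes the Jordan type of $\times(z+\lambda w)$ on $A$ from the single block $(p)$ and the Jordan type of $\bar w$ on $U_1$ via the (Clebsch--Gordan-type) rule governing $z\otimes1+1\otimes\bar w$ on a tensor product, and the symmetry of the Hilbert function of $A$---which forces that of $U_1$---makes the result coincide with the conjugate partition of the Hilbert function of $A$, i.e.\ $z+\lambda w$ is strong Lefschetz.

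The hard part is precisely this reassembly (equivalently, the rank computation in the base case). An extension of graded modules that each satisfy the SLP with a common Lefschetz element generally fails to satisfy the SLP; it succeeds here only because the Gorenstein symmetry of the Hilbert function of $A$ forces the Hilbert functions of the submodules and of the central simple modules to fit together with no rank-destroying overlap. Carrying this out means tracking how the Jordan blocks coming from $\times z$ merge with those coming from $\bar w$ on each $U_i$ under the deformation $z\rightsquigarrow z+\lambda w$---recalling that the splitting $A\cong\bigoplus_i U_i\otimes_K K[z]/(z^{\ell_i})$ is only a splitting of $K[z]$-modules and is not $\bar w$-equivariant---and then verifying that the resulting partition is the conjugate of the Hilbert function of $A$. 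This combinatorial bookkeeping, everywhere supported by the Gorenstein hypothesis, is the technical core of the argument.
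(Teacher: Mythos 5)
The paper itself gives no proof of this statement: it is quoted from \cite{HW1} (Theorem 1.2) and used as a black box, so the only comparison available is with the proof in that reference. Your implication (i) $\Rightarrow$ (ii) is correct and complete: for an SL element $z$ the primitive ($\mathfrak{sl}_2$-type) decomposition makes every homogeneous $z$-string symmetric about $c/2$, hence each $U_i$ is concentrated in the single degree $(c-f_i+1)/2$ (where $f_i$ is the corresponding string length) and satisfies the SLP vacuously.

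The implication (ii) $\Rightarrow$ (i), however, is where all the content lies, and what you offer is a plan rather than a proof---you say as much when you call the reassembly step ``the technical core.'' Two essential ingredients are missing. First, both your base case and your ``no rank-destroying overlap'' claim require knowing that each $U_i$ has a symmetric Hilbert function, positioned so that the pieces $U_i\otimes_K K[t]/(t^{f_i})$ are all centered at $c/2$; this is a genuine theorem about Gorenstein algebras (the graded analogue of Iarrobino's symmetric decomposition, and the companion structural result in \cite{HW1}), and it is nowhere established in your argument. Second, the step from ``each piece has the SLP'' to ``$A$ has the SLP'' cannot be carried out by the induction on $s$ you propose: passing from $\tilde A=A/(0:z^q)$ together with $U_s$ back to $A$ is exactly an extension of the kind you concede ``generally fails,'' and you supply no mechanism that makes it succeed here. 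The argument in \cite{HW1} avoids this by working with the associated graded module of $A$ along the filtration by the ideals $(0:z^j)$, where $\bigoplus_i U_i\otimes_K K[t]/(t^{f_i})$ becomes an actual module decomposition; one then applies the characteristic-zero tensor (Clebsch--Gordan) result to each summand, uses the common center $c/2$ to conclude that the direct sum has the SLP with respect to a single element, and finally lifts the full-rank statements from the associated graded module back to $A$. Until the symmetry of the $U_i$ and this graded-to-ungraded transfer are supplied, the proposal does not prove the theorem.
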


\begin{remark}
Let $A$ be an Artinian Gorenstein $K$-algebra with the SLP. 
Then it is not always true that 
all central simple modules of $(A,z)$ have the SLP 
for all linear forms $z$ of $A$ 
(see Example 6.10 in \cite{HW1}).   
\end{remark}

\section{First theorem}

\begin{theorem}\label{th1}
Let $R=K[x_1,x_2,\ldots,x_n]$ be the polynomial ring, 
and let $d_1,$ $d_2, \ldots , d_n$ be positive integers.  
For $n\geq 2$, 
consider an Artinian complete intersection ideal $I$ of $R$ 
generated by the polynomials 
\[
\{ x_1^{d_1}l_1, x_2^{d_2}l_2,\ldots,x_n^{d_n}l_{n} \}, 
\]
where 
\[
\begin{array}{rcl}
l_i&=&a_{i1}x_1+a_{i2}x_2+\cdots+a_{ii+1}x_{i+1} \ \ (i=1,2,\ldots,n-1) \\
l_{n}&=&a_{n1}x_1+a_{n2}x_2+\cdots+a_{nn}x_n \\
\end{array}
\]
are linear forms of $R$. 
When $n=1$, put $I=(x_1^{d_1+1}) \subset R=K[x_1]$. 
Then $A=R/I$ has the SLP.  
\end{theorem}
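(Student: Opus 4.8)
The plan is to argue by induction on $n$, using Theorem~\ref{csm} to reduce the SLP of $A$ to the SLP of the central simple modules of $(A,z)$ for a suitable linear form $z$. The base case $n=1$ is immediate since $A=K[x_1]/(x_1^{d_1+1})$ is a monomial complete intersection in one variable, which has the SLP with $z=x_1$. For the inductive step, the natural choice is $z=x_n$ (or, after a change of coordinates, the variable not appearing in $l_1,\dots,l_{n-1}$). First I would compute the colon ideals $0:z^k$ in $A$; because the last generator is $x_n^{d_n}l_n$ and $l_n$ involves $x_n$, one sees that $0:x_n^{d_n}$ is generated by the class of $x_n^{d_n-1}l_n$ together with elements coming from the other generators, and the filtration stabilizes quickly. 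The key structural point, which is exactly the extension phenomenon mentioned in the introduction, is that modulo $0:x_n^q$ the algebra becomes (up to the SLP-irrelevant issue of which variable is killed) a complete intersection of the same shape in fewer variables, while the top central simple module $U_s$ is a cyclic module over such a smaller algebra.

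The heart of the argument is therefore to identify the central simple modules of $(A,x_n)$ explicitly. I expect that $(A,x_n)$ has exactly two central simple modules (or a short chain of them): one isomorphic to $A/(x_n)$, which by construction is $R'/I'$ with $R'=K[x_1,\dots,x_{n-1}]$ and $I'$ generated by $x_1^{d_1}l_1,\dots,x_{n-1}^{d_{n-1}}l_{n-1}$ together with the image of $x_n^{d_n}l_n$, hence again a member of $\F$ in $n-1$ variables to which the induction hypothesis applies (after checking that the linear forms retain the required triangular shape and generality); and the remaining central simple module, obtained from the successive quotients of $(0:x_n^k)+(x_n)$, which should be a twist of a cyclic module over a similar smaller complete intersection. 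For each such module I must verify it has the SLP as a module over the appropriate subalgebra — for the quotient $A/(x_n)$ this is the induction hypothesis, and for the other piece it should follow because a cyclic module over an algebra with the SLP, generated in a single degree, inherits the SLP (its Hilbert function is a shifted truncation of the subalgebra's, and multiplication by the subalgebra's SL element still has full rank).

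The main obstacle will be bookkeeping the genericity hypothesis through the induction: when I pass from $R$ to $R'=K[x_1,\dots,x_{n-1}]$ I must ensure that the new generators $x_1^{d_1}l_1,\dots,x_{n-1}^{d_{n-1}}l_{n-1}$, together with whatever survives from $x_n^{d_n}l_n$, still define an Artinian complete intersection of the required form with sufficiently general linear factors, so that the induction hypothesis genuinely applies to both central simple modules. A related subtlety is that Theorem~\ref{csm} produces \emph{some} linear form $z$ for which all central simple modules have the SLP, and I have chosen $z=x_n$; I must confirm that $x_n$ (or its generic perturbation) actually works, i.e. that the filtration by colon ideals is the expected one and not degenerate, which is where the generality of the coefficients $a_{ij}$ is used. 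Once the central simple modules are pinned down and each is shown to have the SLP over its base ring, Theorem~\ref{csm} assembles these into the SLP for $A$, completing the induction.
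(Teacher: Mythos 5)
Your top-level skeleton (induction on $n$ plus Theorem~\ref{csm}) is the paper's, but the choice of filtering element and the identification of the central simple modules both go wrong, and the step you lean on to finish is unproved. The paper takes $z=x_1$, precisely because the triangular shape forces $l_1=a_{11}x_1+a_{12}x_2$ to involve only two variables: then $(I:x_1^{i})+(x_1)=I+(x_1)$ for $i<d_1$ and $(I:x_1^{d_1})+(x_1)=I+(l_1,x_1)$, so the last central simple module is $\overline{A}/(0:\overline{l_1})$ with $\overline{l_1}$ a unit times $x_2$, and the colon ideal is again of the theorem's triangular form with $d_2$ replaced by $d_2-1$ (the algebra $B_1$), so the induction hypothesis applies. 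With your choice $z=x_n$, the relevant generator is $x_n^{d_n}l_n$ where $l_n$ involves \emph{all} the variables, so the top central simple module becomes $\overline{A}/(0:\overline{l_n})$ with $\overline{l_n}$ a linear form in all of $x_1,\ldots,x_{n-1}$; the colon ideal $(J:\overline{l_n})$ is not of the triangular shape, and Theorem~\ref{th1} carries \emph{no} genericity hypothesis you could use to control it (saturating by a general linear form is exactly the extra difficulty that Theorem~\ref{th2} handles via Lemma~\ref{second key lemma}). Your fallback --- ``a cyclic module over an algebra with the SLP, generated in a single degree, inherits the SLP'' --- is not a theorem: the Hilbert function of $fA$ is not a shifted truncation of that of $A$ in general, and full rank of $\times\ell^d$ on $A$ does not transfer to the submodule $fA$. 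This is the heart of the matter, not a routine verification; if such a principle held, Theorem~\ref{csm} would do almost all the work by itself.

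Two further gaps. First, there are not ``exactly two'' central simple modules in general: $U_1$ is a proper quotient of $A/zA$, and the correct mechanism (Remark~\ref{Re_csm}(3)) is that $U_1,\ldots,U_{s-1}$ form the \emph{full set} of central simple modules of a smaller pair $(B_2,\overline{x_2})$, where $B_2\cong R/(I+(l_1))$ is again of triangular shape in $x_2,\ldots,x_n$. To exploit this you must carry the stronger induction statement ``all central simple modules of $(A,x_1)$ have the SLP,'' not merely ``$A$ has the SLP'': Theorem~\ref{csm} only gives you \emph{some} good linear form for $B_2$, whereas the filtration forces you to use the specific induced element. Second, you need the case split the paper makes: if $a_{12}=0$ (so $l_1$ is proportional to $x_1$) the pair $(A,x_1)$ has a single central simple module $A/x_1A$ and the argument is different from the case $a_{12}\neq 0$. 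The only genericity-type input needed anywhere is the observation (Remark~\ref{diagonal}) that the complete intersection hypothesis forces $a_{ii}\neq 0$; your worry about propagating a genericity hypothesis belongs to Theorem~\ref{th2}, not here.
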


\begin{remark}\label{diagonal} 
Let $M=[a_{ij}]$ be an $n\times n$ matrix with entries in $K$  
and let $d_1,d_2,\ldots,d_n$ be positive integers. 
Consider the ideal 
\[
I=I_{M,(d_1,\ldots,d_n)}=(x_i^{d_i}(\sum_{j=1}^n a_{ij}x_j) \mid 1\leq i\leq n)
\]
of $R=K[x_1,x_2,\ldots,x_n]$. 
Then, it follows from Lemma 2.1 in \cite{A} that 
$I$ is an Artinian complete intersection ideal 
if and only if all principal minors of $M$ are non-zero. 
In particular, 
if $I$ is an Artinian complete intersection ideal, 
then the diagonal  entries $a_{ii}$ are non-zero.  
\end{remark}

\begin{proof}[Proof of Theorem~\ref{th1}] 
We prove that 
all the central simple modules of $(A,z)$ have the SLP, 
where $z$ is the image of $x_1$ in $A$. 
Then this theorem follows by Theorem~\ref{csm}. 
We use induction on the number $n$ of variables. 
When $n=1$, 
we have that $(I:x_1^i)+(x_1)=(x_1)$ for $1\leq i \leq d_1$ 
and $(I:x_1^{d_1+1})+(x_1)=K[x_1]$. 
Hence, by Remark~\ref{Re_csm} (2), 
$(A, z)$ has only one central simple module which is isomorphic to $A/zA \cong K$, 
and the assertion is trivial.

So, let $n\geq 2$. 
Note that $a_{ii}\neq 0$ for all $i$ (see Remark~\ref{diagonal}).

Case 1: If $l_1=x_1$, 
then $(I:x_1^i)+(x_1)=I+(x_1)$ for $1\leq i \leq d_1$ and $(I:x_1^{d_1+1})+(x_1)=R$. 
Hence, $(A, z)$ has only one central simple module which is isomorphic to $A/zA$. 
We want to show that $A/zA$ has the SLP. 
Let $\overline{l_i}=l_i-a_{i1}x_1$ for $2 \leq i \leq n$. 
Then 
\[
A/zA \cong \overline{A} = K[x_2,x_3,\ldots,x_n]/(x_2^{d_2}\overline{l_2}, x_3^{d_3}\overline{l_3},\ldots,x_n^{d_n}\overline{l_n}). 
\]
By induction hypothesis $A/zA$ has the SLP.

Case 2: Consider the case $l_1=a_{11}x_1+a_{12}x_2$ $(a_{12} \neq 0)$. 
Let $U_1,U_2,\ldots,U_s$ be all the central simple modules of $(A,z)$. 
Since $(I:x_1^i)+(x_1)=I+(x_1)$ for $1\leq i < d_1$ 
and $(I:x_1^{d_1})+(x_1)=I+(l_1, x_1)$, 
it follows from Remark~\ref{Re_csm} (1) that 
the last central simple module $U_s$ is 
a principal ideal generated by the image $\overline{l_1}$ of $l_1$ in $\overline{A}$. 
Hence, noting that $a_{12}\neq 0$, 
we have that  
\[
U_s \cong \overline{A}/(0:\overline{l_1}) \cong B_1=K[x_2,x_3,\ldots,x_n]/(x_2^{d_2-1}\overline{l_2}, x_3^{d_3}\overline{l_3},\ldots,x_n^{d_n}\overline{l_n}).  
\]
Here inductively we may assume that $B_1$ has the SLP. 
Thus, $U_s$ has the SLP. 

Moreover, by Remark~\ref{Re_csm} (3), 
it is possible to regard $U_1,\ldots,U_{s-1}$ 
as the full set of the central simple modules of $(R/I+(l_1), \overline{z})$, 
where $\overline{z}$ is the image of $z$ in $R/I+(l_1)$. 
Let $\tilde{l_i}=l_i-a_{i1}x_1-\frac{a_{i1}a_{12}}{a_{11}}x_2$ for $2\leq i \leq n$, and let 
\[
B_2=K[x_2,x_3,\ldots,x_n]/(x_2^{d_2}\tilde{l_2}, x_3^{d_3}\tilde{l_3},\ldots,x_n^{d_n}\tilde{l_n}). 
\]
Then, since $R/I+(l_1) \cong B_2$ and $a_{12}\neq 0$, 
it follows that $U_1,\ldots,U_{s-1}$ is 
the central simple modules of $(B_2, \overline{x_2})$, 
where $\overline{x_2}$ is the image of $x_2$ in $B_2$. 
Here inductively we may assume that 
all the central simple modules of $(B_2, \overline{x_2})$ have the SLP, 
and hence $U_1,\ldots,U_{s-1}$ have the SLP. 
This completes the proof. 
\end{proof}

\section{Second theorem}

Let $\Q$ be the rational number field.

\begin{theorem}\label{th2}
Let $K$ be a field containing $\Q$ 
and let $R=K[x_1,x_2,\ldots,x_n]$ be the polynomial ring.  
Let $\{m_i\}$ and $\{d_{ij}\}$ be positive integers, 
and consider a graded ideal $I$ of $R$ generated by the polynomials as follows, 
\[
\{ F_1=\prod_{j=1}^{m_1}L_{1j}^{d_{1j}}, F_2=\prod_{j=1}^{m_2}L_{2j}^{d_{2j}}, \ldots, F_n=\prod_{j=1}^{m_n}L_{nj}^{d_{nj}} \}, 
\]
where 
\[
L_{ij}=\xi_1^{(i,j)}x_1+\xi_2^{(i,j)}x_2+\cdots+\xi_n^{(i,j)}x_n \ \ (\xi_k^{(i,j)} \in K)
\]
are linear forms for all $i$ and $j$. 
Assume that the coefficients 
\[
\{ \xi_k^{(i,j)} \mid 1\leq i \leq n, 1 \leq j \leq m_i, 1\leq k \leq n \}
\]
of $L_{ij}$ are algebraically independent over $\Q$. 
Then we have the following. 
\begin{itemize} 
\item[(1)] 
$I$ is an Artinian complete intersection of $R$.  
\item[(2)] 
$A=R/I$ has the SLP. 
\end{itemize}
\end{theorem}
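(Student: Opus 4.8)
The plan is to deduce Theorem~\ref{th2} from Theorem~\ref{th1} by a specialization argument, passing from algebraically independent coefficients to the "staircase" normal form handled there. First I would address part (1): by Remark~\ref{diagonal} (the criterion from Lemma 2.1 of \cite{A}), it suffices to know that $I$ is an Artinian complete intersection for \emph{some} choice of coefficients, since the complete intersection locus is Zariski-open and nonempty, hence contains the generic point when the coefficients are transcendental over $\Q$. Concretely, specializing each $F_i$ to $x_i^{D_i}$ (with $D_i=\sum_j d_{ij}$) gives a monomial complete intersection, so the generic $I$ is an Artinian complete intersection. The same openness principle drives part (2): the SLP is also a Zariski-open condition on the coefficient space (full rank of the multiplication matrices $\times z^d$ is the non-vanishing of certain minors, and one may even take $z$ to be a generic linear form), so it is enough to exhibit \emph{one} specialization of the $\xi_k^{(i,j)}$, with coefficients lying in a purely transcendental extension of $\Q$ inside $K$, for which $R/I$ has the SLP; genericity of the original coefficients then transports the property back.

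The heart of the matter is choosing that specialization so that Theorem~\ref{th1} applies. The ideal in Theorem~\ref{th1} is generated by $x_i^{d_i}l_i$ where $l_i$ involves only $x_1,\dots,x_{i+1}$ (and $l_n$ only $x_1,\dots,x_n$); that is a single linear factor per generator in a strict "triangular" pattern. To match the present situation, where $F_i=\prod_j L_{ij}^{d_{ij}}$ has several linear factors, I would first reduce to the one-factor-per-generator case by an iterated extension/specialization: peeling off factors $L_{ij}$ one at a time and specializing the extra factors to pure powers of a single variable, using at each stage that the relevant ideal remains a complete intersection (again via Remark~\ref{diagonal}) and that the SLP is preserved under the specialization by openness. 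Alternatively, and perhaps more cleanly, I would argue that the family $\F$ is closed under the extension operation $0\to A'\to A\to A''\to 0$ mentioned in the introduction, and use the central-simple-module characterization (Theorem~\ref{csm}) directly: for a generic linear form $z=x_1$, the central simple modules of $(A,z)$ are themselves (up to the algebra automorphism eliminating $x_1$) algebras of the same shape in fewer variables or with smaller exponents, so an induction on $\sum_{i,j} d_{ij}$ plus $n$ closes the argument exactly as in the proof of Theorem~\ref{th1}.

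The main obstacle is the bookkeeping of the central simple modules when a generator $F_i$ has a linear factor equal to (a scalar times) the Lefschetz variable $x_1$ after specialization, versus the case where no factor is proportional to $x_1$: these are the analogues of Case 1 and Case 2 in the proof of Theorem~\ref{th1}, and with multiple factors per generator one must track how the exponent $d_{ij}$ on the factor $L_{1j}$ that degenerates to $x_1$ controls the length $p$ of the filtration by $(0:z^k)+(z)$, and identify each successive quotient $U_\ell$ with an algebra of the same type but with the exponent on some $L_{2j'}$ decremented (the role played by $B_1$ above) or with $x_1$ eliminated from all the $L_{ij}$ (the role played by $B_2$). Verifying that every $U_\ell$ that arises is again — after the automorphism that clears $x_1$, which exists precisely because a relevant coefficient is nonzero, guaranteed here by algebraic independence — a member of $\F$ (or a cyclic module over such), so that the inductive hypothesis applies, is where the real work lies; the openness/specialization wrapper and part (1) are comparatively routine.
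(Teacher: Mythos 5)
Your primary route --- semicontinuity on the coefficient space --- is genuinely different from the paper's and is sound in outline, but you have made it harder than necessary and left unexamined the one step that actually needs care. There is no reason to ``peel off factors one at a time'': specializing \emph{every} $L_{ij}$ to $x_i$ sends $F_i$ to $x_i^{D_i}$ with $D_i=\sum_j d_{ij}$, i.e.\ to the monomial complete intersection, which is the $l_i=x_i$ case of Theorem~\ref{th1} (and classically has the SLP in characteristic zero, with SL element $x_1+\cdots+x_n$). Your intermediate targets, by contrast, do not land in the form covered by Theorem~\ref{th1} whenever some $d_{ij}>1$ (you would get $l_i^{d_{i1}}x_i^{e}$ rather than $x_i^{d}l_i$), and the phrase ``the SLP is preserved under the specialization'' is the wrong direction: it is the SLP at the \emph{special} point that propagates to the generic one. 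What does need to be written out is the openness itself: fix the single candidate $z_0=x_1+\cdots+x_n$ (so no quantifier over $z$ intervenes), note that on the complete intersection locus the Hilbert function is constant by the Koszul resolution (so ``full rank'' is a well-posed rank condition on matrices with entries in $\Q[\{\xi_k^{(i,j)}\}]$), and conclude that the non-SLP locus lies in a proper $\Q$-closed subset, which a point with coordinates algebraically independent over $\Q$ must avoid. With those repairs this is a legitimate, and shorter, proof of the statement as given.

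The paper does none of this globally; it runs the central simple module induction directly (on $n$ and on $m_1$, with $z=L_{11}$), and it is exactly there that your alternative route has a genuine gap. The last central simple module is $U_s\cong\overline{A}/(0:\prod_{j=2}^{m_1}l_{1j}^{d_{1j}})$, a quotient by a colon ideal $(\cdots):\prod_{j\geq 2}\overline{L}_{1j}^{d_{1j}}$. This is \emph{not} an algebra ``of the same shape'': it is not defined by products of linear forms, so your claim that every $U_\ell$ is again a member of $\F$ fails and the induction does not close as described. (In Theorem~\ref{th1} it does close, because there one colons out a single linear form that can be changed into a variable, turning the colon back into a complete intersection of the same type with one exponent decremented.) The paper bridges this with two lemmas proved by comparing Jordan types under field extension: a linear form whose coefficients are algebraically independent over the field of definition of an algebra with the SLP is an SL element (Lemma~\ref{key lemma}), and quotients by colon ideals $J:L_1^{d_1}\cdots L_m^{d_m}$ with generic $L_i$ inherit the SLP (Lemma~\ref{second key lemma}, via Proposition 3.11 of \cite{HMMNWW}). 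Identifying and proving these is precisely the ``real work'' your sketch defers.
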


To prove Theorem~\ref{th2}, 
we review a characterization of SL elements in terms of Jordan types, 
which is explained in Section 3.5 of \cite{HMMNWW}.

Let $(A,\Gm)$ be an Artinian local ring 
which contains the residue field $K=A/\Gm$, 
and let $y\in \Gm$. 
Consider the map $\times y: A\longto A$ defined by $a\mapsto ya$. 
Since $A$ is Artinian, 
the linear map $\times y$ is nilpotent,  
and hence the Jordan canonical matrix 
$J_A(\times y)$ of $\times y$ is a matrix of the  following form: 
\[
J_A(\times y)=
\begin{bmatrix}
J(n_1) &     &        & \bigzerou        \\ 
    & J(n_2) &        &                   \\
    &     & \ddots &                        \\
\bigzerol   &      &        & J(n_r)      \\     
\end{bmatrix},  
\]
where $J(m)$ is the Jordan block of size $m$ 
\[
J(m)=
\begin{bmatrix}
0 &  1   &        & \bigzerou       \\
    & 0 &  \ddots      &            \\
    &     & \ddots &    1           \\
\bigzerol   &      &        & 0     \\     
\end{bmatrix}.  
\]
We denote the {\em Jordan type} of $\times y$ by writing 
\[
P_A(\times y)=n_1\oplus n_2\oplus \cdots \oplus n_r. 
\]
We note that 
\[
r=\dim_K A/yA \ \text{ and } \ \dim_K A=\sum_{i=1}^r n_i. 
\]
Two types 
$n_1\oplus n_2\oplus \cdots \oplus n_r$ 
and $n_1'\oplus n_2'\oplus \cdots \oplus n_{r'}'$ 
are regarded  as the same 
if they differ only by permutation of blocks. 
Thus they are treated as partitions of a positive integer.

Let $P = n_1 \oplus n_2 \oplus \cdots \oplus n_r $ and
$Q =m_1 \oplus m_2 \oplus \cdots \oplus m_{s}$ 
be two partitions of a positive integer $n$, 
i.e., 
all $n_i$ and $m_j$ are positive integers 
satisfying $\sum_{i=1}^rn_i=\sum_{j=1}^sm_j=n$, 
where we assume that 
$n_1 \geq n_2 \geq \cdots \geq n_r$ 
and $m_1 \geq m_2 \geq \cdots  \geq m_{s}$.
Then we write $P \succ Q$ 
if and only if (i) $r < s$ or (ii) $r=s$ and
$n_i=m_i$ for $i=1,2, \cdots, k-1$ and $n_k > m_k$. 
So $\succ$ is a total order on the set of all partitions of $n$.

For a partition $P=n_1\oplus n_2\oplus\cdots\oplus n_r$ 
of a positive integer $n$, 
define the polynomial 
\[
Y(P,\lambda)=\sum_{i=1}^r(1+\lambda+\lambda^2+\cdots+\lambda^{n_i-1}), 
\]
and let $d_i$ be the coefficient of $\lambda^i$ in $Y(P,\lambda)$ 
for all $i=0,1,\ldots,c$, 
where $c=\Max_i \{n_i-1\}$. 
Then we call $\widehat{P}=d_0\oplus d_1\oplus\cdots\oplus d_c$ 
the {\em dual partition} of $P$.

\begin{lemma}[\cite{HMMNWW} Proposition 3.64]\label{SL element}
Let $A=\bigoplus_{i=0}^c A_i$ be a standard graded Artinian $K$-algebra 
with $A_c \neq (0)$, and let $y\in\bigoplus_{i=1}^cA_i$ be a homogeneous element. 
Put $h_i=\dim_KA_i$ for all $0\leq i\leq c$. 
\begin{itemize}
\item[(1)] 
Suppose that the Hilbert function $\{h_i\}$ of $A$ is unimodal. 
Then $P_A(\times y)$ is less than or equal to the dual partition of 
$h_0 \oplus h_1 \oplus \cdots \oplus h_c$ 
with respect to the total order $\succ$. 
\item[(2)] 
Suppose that $y\in A_1$ is a linear form. 
Then the following conditions are equivalent.  
\begin{itemize}
\item[(i)] 
$y$ is an SL element of $A$. 
\item[(ii)] 
the Hilbert function of $A$ is unimodal 
and 
$P_A(\times y)$ is equal to the dual partition of 
$h_0 \oplus h_1 \oplus \cdots \oplus h_c$. 
\end{itemize}
\end{itemize}
\end{lemma}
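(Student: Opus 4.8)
The plan is to reduce both parts to statements about the ranks of the maps $\times y^k$ and to compare partitions through the dominance order, which the total order $\succ$ refines. I will use two standard facts. First, $\succ$ is a linear extension of the dominance order $\trianglerighteq$: if $P\trianglerighteq Q$ and $P\ne Q$ then $P\succ Q$, because dominance forces $P$ to have no more parts than $Q$ (compare first partial sums of the conjugates, which record the numbers of parts) and, when the numbers of parts agree, to be lexicographically at least as large. Second, for a nilpotent $N$ with Jordan type $P$ and dual $\widehat P=d_0\oplus d_1\oplus\cdots$, the partial sums of $\widehat P$ record kernel dimensions: $\dim_K\ker N^k=\sum_{i=0}^{k-1}d_i$, since a block of length $\ell$ contributes $\min(\ell,k)$. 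I will also use that conjugation reverses dominance, $P\trianglerighteq Q\iff \widehat Q\trianglerighteq\widehat P$. Throughout write $H=h_0\oplus\cdots\oplus h_c$, let $\widehat H$ be its dual, and let $h_{(0)}\ge h_{(1)}\ge\cdots$ be the decreasing rearrangement of $(h_0,\dots,h_c)$.

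The combinatorial core is the following computation for a unimodal $H$. Writing $w_t=\#\{\ell : h_\ell\ge t\}$ for the width at height $t$ of the bar diagram of $H$, and letting $k\ge 1$, I claim
\[
\sum_{\ell} \min(h_\ell, h_{\ell+k}) \;=\; \sum_{t \ge 1}\max(0,\, w_t - k) \;=\; \sum_{j \ge k} h_{(j)}.
\]
The proof is to count by horizontal levels: $\min(h_\ell,h_{\ell+k})$ equals the number of levels $t$ at which both columns $\ell$ and $\ell+k$ reach height $t$; by unimodality the level set $\{\ell: h_\ell\ge t\}$ is an \emph{interval} of length $w_t$, so the number of $\ell$ with both $\ell$ and $\ell+k$ inside it is $\max(0,w_t-k)$. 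Carrying out the same level-count for the sorted sequence gives the last equality. Unimodality is exactly what makes the level sets intervals, and the identity genuinely fails without it; this step is the heart of the argument.

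For part (1), let $e=\deg y\ge 1$. Since $\times y^k$ is graded, $\rank(\times y^k)=\sum_\ell \rank(\times y^k:A_\ell\to A_{\ell+ke})\le \sum_\ell \min(h_\ell,h_{\ell+ke})$, and by the level-set computation (using $ke\ge k$) this is at most $\sum_{t}\max(0,w_t-k)=\sum_{j\ge k}h_{(j)}$. Hence $\dim_K\ker(\times y^k)\ge \dim_K A-\sum_{j\ge k}h_{(j)}=\sum_{j<k}h_{(j)}$ for every $k$. The left side is the $k$-th partial sum of $\widehat{P_A(\times y)}$ and the right side is the $k$-th partial sum of $H$ (sorted); as the total sums coincide (both equal $\dim_K A$), this says $\widehat{P_A(\times y)}\trianglerighteq H$. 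Conjugating reverses dominance, so $P_A(\times y)\trianglelefteq \widehat H$, and since $\succ$ extends $\trianglerighteq$ we get $P_A(\times y)\preceq\widehat H$, which is (1).

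For part (2) take $e=1$, so the inequalities in (1) become equalities exactly under the SLP. If $y$ is an SL element, each $\times y^k:A_\ell\to A_{\ell+k}$ has full rank, whence $\rank(\times y^k)=\sum_\ell\min(h_\ell,h_{\ell+k})$; moreover the SLP forces unimodality, for a strict interior minimum $h_{\ell-1}>h_\ell<h_{\ell+1}$ would make $\times y:A_{\ell-1}\to A_\ell$ surjective and $\times y:A_\ell\to A_{\ell+1}$ injective, so $\times y^2:A_{\ell-1}\to A_{\ell+1}$ would have rank at most $h_\ell<\min(h_{\ell-1},h_{\ell+1})$, contradicting full rank. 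Now the identity applies, giving $\dim_K\ker(\times y^k)=\sum_{j<k}h_{(j)}$ for all $k$; thus the partial sums of $\widehat{P_A(\times y)}$ equal those of $H$, so $\widehat{P_A(\times y)}=H$ and $P_A(\times y)=\widehat H$. Conversely, if $H$ is unimodal and $P_A(\times y)=\widehat H$, reading the same equalities backwards yields $\rank(\times y^k)=\sum_\ell\min(h_\ell,h_{\ell+k})$; since $\rank(\times y^k)=\sum_\ell\rank(\times y^k:A_\ell\to A_{\ell+k})$ with each summand bounded by the corresponding minimum, equality of the totals forces termwise equality, i.e.\ every $\times y^k:A_\ell\to A_{\ell+k}$ has full rank and $y$ is an SL element. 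The main obstacle is establishing the combinatorial identity as an \emph{equality} (not merely an inequality) for unimodal sequences, which is what makes part (2) tight; the two partition facts and the surjective-then-injective factorization forcing unimodality are then routine bookkeeping.
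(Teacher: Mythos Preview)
The paper does not prove this lemma; it is quoted verbatim from \cite{HMMNWW}, Proposition~3.64, and no argument is given in the present paper. So there is no proof here to compare against, and your write-up stands on its own.

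Your argument is sound in structure and the key identity
\[
\sum_{\ell}\min(h_\ell,h_{\ell+k})=\sum_{t\ge 1}\max(0,w_t-k)=\sum_{j\ge k}h_{(j)}
\]
for unimodal $H$ is correct and is indeed the combinatorial heart of the matter. The reduction to dominance via conjugate partitions, together with the observation that the paper's total order $\succ$ refines dominance, is clean and gives part~(1) directly.

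There is one small gap in your unimodality argument for (2)(i)$\Rightarrow$(ii). You assume that a non-unimodal Hilbert function has a strict interior minimum at \emph{adjacent} indices, $h_{\ell-1}>h_\ell<h_{\ell+1}$, but this need not occur (consider $(3,2,2,3)$). What non-unimodality does guarantee is the existence of indices $i<j<k$ with $h_j<\min(h_i,h_k)$: take a descent at $a$ followed by a later ascent at $b$, and let $j$ realize the minimum of $h$ on $[a{+}1,b]$. Then $\times y^{k-i}:A_i\to A_k$ factors through $A_j$, so its rank is at most $h_j<\min(h_i,h_k)$, contradicting full rank. With this adjustment your proof is complete.
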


The following remark is needed in the proof of Lemmas below.

\begin{remark}\label{Re of JD} 
Let $K$ be a field and let $K'$ be a subfield of $K$. 
Let $B$ be a standard graded Artinian $K'$-algebra, 
and put $A=B\otimes_{K'}K$. 
Then it is easy to see that 
$P_B(\times y)=P_{A}(\times (y\otimes 1))$ for all linear forms $y$ in $B$. 
Since $A$ and $B$ have the same Hilbert function, 
it follows from Lemma~\ref{SL element} (2) that 
if $y$ is an SL element of $B$ 
then $y\otimes 1$ is also an SL element of $A$. 
Thus, if $B$ has the SLP then $A$ also has the SLP. 
\end{remark}

\begin{lemma}\label{key lemma}
Let  $R=K[x_1,x_2, \ldots,x_n]$ be the polynomial ring over a field $K$. 
Let $K'$ be a subfield of $K$ 
and let $J$ be an Artinian graded ideal of $R'=K'[x_1, x_2, \ldots, x_n]$. 
Assume that 
the elements $\{\eta_1,\eta_2,\ldots,\eta_n\}$ in $K$ 
are algebraically independent over $K'$. 
Assume that $B=R'/J$ has the SLP. 
Then the image of $L=\eta_1x_1+\eta_2x_2+\cdots+\eta_nx_n$ in $A=R/JR$ is 
an SL element of $A$. 
\end{lemma}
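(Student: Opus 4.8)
The plan is to realize the transferred element as a common non-vanishing point of finitely many determinantal polynomials that are defined over the \emph{small} field $K'$, so that the algebraic independence of $\eta_1,\dots,\eta_n$ over $K'$ forces these polynomials not to vanish there. First I would fix an SL element $z$ of $B$, which exists by hypothesis; since $B_1$ is spanned over $K'$ by the images $\bar x_1,\dots,\bar x_n$ of the variables, I can write $z=\gamma_1\bar x_1+\cdots+\gamma_n\bar x_n$ with all $\gamma_k\in K'$. Let $c$ be the top degree and $\{h_i\}$ the Hilbert function of $B$; by Remark~\ref{Re of JD} the algebra $A=R/JR=B\otimes_{K'}K$ has the same $c$ and $\{h_i\}$, with $A_i=B_i\otimes_{K'}K$. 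Fix a $K'$-basis of each $B_i$.

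Next I would introduce indeterminates $a=(a_1,\dots,a_n)$ and, for each pair $(i,d)$ with $1\le d\le c-1$ and $0\le i\le c-d$, write $M_{i,d}(a)$ for the matrix of $\times(\sum_k a_k\bar x_k)^d\colon B_i\to B_{i+d}$ in the chosen bases. Because $B$ is a $K'$-algebra, every entry of $M_{i,d}(a)$ lies in $K'[a_1,\dots,a_n]$ and is homogeneous of degree $d$. Set $\rho_{i,d}=\min(h_i,h_{i+d})$, which is also the smaller of the two sizes of $M_{i,d}$. Since $z$ is an SL element of $B$, the specialization $M_{i,d}(\gamma)$ has rank $\rho_{i,d}$ for every such $(i,d)$; hence for each $(i,d)$ I may choose a $\rho_{i,d}\times\rho_{i,d}$ submatrix of $M_{i,d}(a)$ whose determinant $\mu_{i,d}(a)\in K'[a_1,\dots,a_n]$ satisfies $\mu_{i,d}(\gamma)\neq 0$; in particular $\mu_{i,d}$ is a nonzero polynomial over $K'$.

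Finally, after the base change $B\rightsquigarrow A$ the multiplication map $\times(\sum_k a_k\bar x_k)^d\colon A_i\to A_{i+d}$ is represented by the very same matrix $M_{i,d}(a)$, now viewed over $K$, and the image of $L$ in $A$ is $\ell=\eta_1\bar x_1+\cdots+\eta_n\bar x_n$. Since $\eta_1,\dots,\eta_n$ are algebraically independent over $K'$ and each $\mu_{i,d}$ is a nonzero element of $K'[a_1,\dots,a_n]$, we get $\mu_{i,d}(\eta_1,\dots,\eta_n)\neq 0$ in $K$, so $M_{i,d}(\eta)$ has rank at least $\rho_{i,d}$, hence exactly $\rho_{i,d}$. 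Thus $\times\ell^d\colon A_i\to A_{i+d}$ has full rank for all $1\le d\le c-1$ and $0\le i\le c-d$, i.e. $\ell$ is an SL element of $A$.

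I expect the only delicate point to be the bookkeeping in the middle step: one must keep the matrices $M_{i,d}$ defined over $K'$ (the base field of $B$), not over $K$, so that a minor which is nonzero at the $K'$-rational point $\gamma$ is literally a nonzero polynomial over $K'$ and therefore remains nonzero after substituting the algebraically independent tuple $(\eta_1,\dots,\eta_n)$. Everything else — that base change preserves the Hilbert function and the matrices of the multiplication maps, and that the chosen minor being nonzero forces full rank — is routine. (Alternatively the last step can be phrased through Lemma~\ref{SL element}: unimodality of $\{h_i\}$ passes from $B$ to $A$, and the rank computation above shows that $P_A(\times\ell)$ equals the dual partition of $h_0\oplus\cdots\oplus h_c$, so $\ell$ is an SL element.)
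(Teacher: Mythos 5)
Your proof is correct, but it takes a genuinely different (and more elementary) route than the paper's. The paper argues through Jordan types: it forms $B(X)=B\otimes_{K'}K'(X)$ with a generic linear form $Y=\sum_k X_k y_k$, invokes Lemma~\ref{SL element} together with Theorem 5.6 of \cite{HMMNWW} to get $P_B(\times y)=P_{B(X)}(\times Y)$ for an SL element $y$ of $B$, then uses the isomorphism $K'(X)\cong K'(\eta)$ supplied by the algebraic independence (plus Proposition 3.60 of \cite{HMMNWW}) to transfer the Jordan type to $l=\sum_k\eta_k y_k$ in $B(\eta)=B\otimes_{K'}K'(\eta)$, and finally passes to $A=B(\eta)\otimes_{K'(\eta)}K$ via Remark~\ref{Re of JD}. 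You instead make the underlying genericity argument explicit: the matrices $M_{i,d}(a)$ of $\times(\sum_k a_k\bar x_k)^d$ have entries in $K'[a]$; an SL element $z=\sum_k\gamma_k\bar x_k$ of $B$ (with $\gamma_k\in K'$, since $B_1$ is spanned over $K'$ by the $\bar x_k$) witnesses, for each $(i,d)$, a maximal minor $\mu_{i,d}\in K'[a]$ that is nonzero at $\gamma$ and hence a nonzero polynomial over $K'$; and the algebraic independence of the $\eta_k$ over $K'$ then forces $\mu_{i,d}(\eta)\neq 0$, giving full rank of $\times\ell^d$ on $A$. Your version is self-contained and avoids the Jordan-type machinery entirely, while the paper's version leans on citable results about generic Jordan types, which fits the framework it uses elsewhere (e.g.\ in Lemma~\ref{second key lemma} and Theorem~\ref{th2}). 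The delicate point you flag --- keeping the minors defined over $K'$ so that nonvanishing at the $K'$-rational point $\gamma$ makes them literally nonzero polynomials over $K'$ --- is exactly where the hypothesis of algebraic independence is consumed, and your treatment of it is sound, so the proof is complete as written.
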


\begin{proof} 
Let $X_1,X_2,\ldots,X_n$ be indeterminates over $B$,  
and let $K'(X)=K'(X_1, $ 
$X_2,\ldots,X_n)$ be the rational function field over $K'$. 
Let $B(X)$ denote the polynomial ring $B[X_1,X_2,\ldots,X_n]$ 
localized at the minimal prime ideal $\Gm B[X_1, $ 
$X_2, \ldots, X_n]$, 
where $\Gm$ is the unique maximal graded ideal of $B$, 
and put $Y=X_1y_1+X_2y_2+ \cdots +X_ny_n \in B(X)$, 
where $y_i$ is the image of $x_i$ in $B$ for all $1\leq i\leq n$. 
Then it is easy to see that $B(X) \cong B \otimes_{K'} K'(X)$. 

Let $y$ be an SL element of $B$ 
and let $P_B(\times y)$ be the Jordan type of $\times y$. 
Then $P_B(\times y)$ is the dual partition of $h_0\oplus h_1\oplus\cdots\oplus h_c$ 
by Lemma~\ref{SL element} (2), 
where $\{h_i\}$ is the Hilbert function of $B$. 
Hence it follows 
that $P_B(\times y)=P_{B(X)}(\times Y)$ 
from lemma~\ref{SL element} and Theorem 5.6 in \cite{HMMNWW}. 

Let $K'(\eta)$ be the subfield of $K$ 
generated by $\{\eta_1,\eta_2,\ldots,\eta_n\}$ over $K'$.  
Put $B(\eta)=B\otimes_{K'}K'(\eta)$ 
and $l=\eta_1y_1+\eta_2y_2+\cdots+\eta_ny_n \in B(\eta)$. 
Then, noting that $B(X)=B\otimes_{K'}K'(X)$, 
we have $B(X) \cong B(\eta)$, and 
\[
\dim_{K'(X)} ((0:_{B(X)}Y^{i+1})/(0:_{B(X)}Y^i)) 
= \dim_{K'(\eta)} ((0:_{B(\eta)}l^{i+1})/(0:_{B(\eta)}l^i))
\]
for all $i$. 
Thus $P_{B(\eta)}(\times l)=P_{B(X)}(\times Y)$ 
by Proposition 3.60 in \cite{HMMNWW}, 
and hence $P_{B(\eta)}(\times l)=P_{B}(\times y)$. 
Since $B(\eta)$ and $B$ have the same unimodal Hilbert function, 
$P_{B(\eta)}(\times l)$ is equal to the dual partition 
of the Hilbert function of $B(\eta)$. 
Therefore $l$ is an SL element of $B(\eta)$ by Lemma~\ref{SL element} (2), 
and $B(\eta)$ has the SLP. 
Thus, since $A=B(\eta)\otimes_{K'(\eta)}K$, 
it follows from Remark~\ref{Re of JD} 
that the image of $L$ in A is an SL element of $A$, 
and $A$ also has the SLP. 
\end{proof}

\begin{lemma}\label{second key lemma}
Let $R=K[x_1,x_2, \ldots,x_n]$ be the polynomial ring over a field $K$. 
Let $K'$ be a subfield of $K$ 
and let $J$ be an Artinian graded ideal of $R'=K'[x_1, x_2, \ldots, x_n]$ 
whose quotient algebra has the SLP. 
Assume that 
the elements $\{\xi_{ij}\}$ in $K$ 
are algebraically independent over $K'$, 
and put $L_i=\xi_{i1}x_1+\xi_{i2}x_2+ \cdots +\xi_{in}x_n \in R$ for all $1\leq i\leq m$.  
Then $A=R/(JR:L_1^{d_1}L_2^{d_2} \cdots L_m^{d_m})$ has the SLP 
for all positive integers $d_1,d_2,\ldots,d_m$.  
\end{lemma}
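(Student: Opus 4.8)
My plan is to peel off the factors $L_i^{d_i}$ one at a time and reduce, by induction on $m$, to the following statement, which I would then prove by a direct rank computation. \emph{Claim:} if $B'=\bigoplus_{i=0}^cB'_i$ is a standard graded Artinian algebra over a field with $B'_c\neq(0)$ and $z\in B'_1$ is an SL element of $B'$, then $B'/(0:_{B'}z^d)$ has the SLP for every positive integer $d$ (in fact the image of $z$ is again an SL element).

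For the reduction I would set $K'_0=K'$ and $K'_p=K'(\xi_{il}\mid 1\le i\le p,\ 1\le l\le n)$ for $1\le p\le m$, put $R'_p=K'_p[x_1,\dots,x_n]$, $J_0=J$, and $J_p=(J_{p-1}R'_p:L_p^{d_p})$. Since the extensions $R'_{p-1}\subseteq R'_p$ ($1\le p\le m$) and $R'_m\subseteq R$ are flat, forming a colon ideal commutes with these base changes, so $J_mR=(JR:L_1^{d_1}\cdots L_m^{d_m})$ and hence $A\cong(R'_m/J_m)\otimes_{K'_m}K$; by Remark~\ref{Re of JD} it suffices to show that $B^{(p)}:=R'_p/J_p$ has the SLP over $K'_p$ for every $p$. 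Here $B^{(0)}=R'/J$ has the SLP by hypothesis, and, writing $\widetilde B^{(p-1)}=B^{(p-1)}\otimes_{K'_{p-1}}K'_p=R'_p/J_{p-1}R'_p$, we have $B^{(p)}=\widetilde B^{(p-1)}/(0:_{\widetilde B^{(p-1)}}L_p^{d_p})$. The coefficients $\xi_{p1},\dots,\xi_{pn}$ of $L_p$ are algebraically independent over $K'_{p-1}$, so Lemma~\ref{key lemma} gives that the image of $L_p$ in $\widetilde B^{(p-1)}$ is an SL element, and the Claim (applied with $B'=\widetilde B^{(p-1)}$, $z=L_p$, $d=d_p$) then shows that $B^{(p)}$ has the SLP, closing the induction. (If at some stage $L_p^{d_p}\in J_{p-1}\widetilde B^{(p-1)}$, i.e.\ $d_p$ reaches the socle degree, then $B^{(p)}$ and hence $A$ is $K$ or $0$, and there is nothing to prove.)

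To prove the Claim I may assume $c-d\ge2$, since for $d>c$ the quotient is $0$ and for $c-d\le1$ it has socle degree $\le1$, for which the SLP is automatic. Then $z^d\neq0$, and $C:=B'/(0:z^d)$ is a standard graded Artinian algebra with $C_0=K$ and socle degree $c_C=c-d$. Multiplication by $z^d$ induces a graded $B'$-module isomorphism $C\cong z^dB'$ raising degrees by $d$, under which the class of $b\in B'_i$ goes to $z^db$ and $\times z^e$ then carries $z^db$ to $z^{d+e}b$; hence, with $h_i=\dim_KB'_i$,
\[
\dim_KC_i=\rank(\times z^d\colon B'_i\to B'_{i+d}),\qquad \rank(\times z^e\colon C_i\to C_{i+e})=\rank(\times z^{d+e}\colon B'_i\to B'_{i+d+e}).
\]
In the range of $(e,i)$ relevant for the SLP of $C$, namely $1\le e\le c_C-1$ and $0\le i\le c_C-e$, one has $1\le d\le c-1$ and $1\le d+e\le c-1$, so, $z$ being an SL element of $B'$, all the multiplication maps occurring have full rank; therefore
\[
\rank(\times z^e\colon C_i\to C_{i+e})=\min(h_i,h_{i+d+e}),\qquad \min(\dim_KC_i,\dim_KC_{i+e})=\min(h_i,h_{i+d},h_{i+e},h_{i+d+e}).
\]
The left-hand value is $\le$ the right-hand one, because the rank of a linear map is bounded by each of the two dimensions, and it is $\ge$ it, because the minimum of a sub-collection of numbers is at least the minimum of the whole collection; hence they are equal, so $\times z^e\colon C_i\to C_{i+e}$ has full rank for all relevant $(e,i)$, i.e.\ $z$ is an SL element of $C$. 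This proves the Claim, and with the reduction, the lemma.

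The substance lies entirely in the Claim, and inside it in the one observation that passing to $B'/(0:z^d)$ turns the rank comparison into a comparison of minima over nested index sets; everything else is routine bookkeeping with flat base change and $m$ applications of Lemma~\ref{key lemma}. The point I would watch most carefully is that the exponents $d+e$ that arise all stay in the range $1\le d+e\le c-1$ where ``full rank'' is guaranteed by the definition of an SL element — which holds precisely because the socle degree of $B'/(0:z^d)$ equals $c-d$.
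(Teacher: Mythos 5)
Your argument is correct and follows essentially the same route as the paper: the same tower of fields $K'=K'_0\subset K'_1\subset\cdots\subset K'_m$, the same ideals $J_p=(J_{p-1}R'_p:L_p^{d_p})$, an application of Lemma~\ref{key lemma} at each stage, and a final descent to $K$ via Remark~\ref{Re of JD}. The only difference is that where the paper cites Proposition 3.11 of \cite{HMMNWW} for the step ``if $z$ is an SL element of $B'$ then $B'/(0:_{B'}z^d)$ has the SLP,'' you prove this inline via the identification $B'/(0:z^d)\cong z^dB'$ and the comparison of $\min(h_i,h_{i+d+e})$ with $\min(h_i,h_{i+d},h_{i+e},h_{i+d+e})$, which is a correct, self-contained substitute for that citation.
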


\begin{proof}
Put $K^{(0)}=K'$ 
and let $K^{(i)}$ be the subfield of $K$ 
generated by $\{\xi_{i1},\xi_{i2},$ $\ldots,\xi_{in}\}$ over $K^{(i-1)}$ 
for $1\leq i\leq m$. 
Put $R^{(i)}=K^{(i)}[x_1,x_2,\ldots,x_n]$ for $0\leq i\leq m$, 
$J^{(0)}=J$ 
and $J^{(i)}=J^{(i-1)}R^{(i)}:L_i^{d_i}$ for $1\leq i\leq m$. 
We prove by induction on $i=0,1,\ldots,m$ that 
$R^{(i)}/J^{(i)}$ has the SLP for $0\leq i \leq m$. 
The first case $i=0$ follows from our assumption. 

Assume that $R^{(i)}/J^{(i)}$ has the SLP. 
Then, it follows that 
the image of $L_{i+1}$ in $R^{(i+1)}/J^{(i)}R^{(i+1)}$ is 
an SL element of $R^{(i+1)}/J^{(i)}R^{(i+1)}$ by Lemma~\ref{key lemma}, 
and hence $R^{(i+1)}/J^{(i+1)}$ has the SLP by Proposition 3.11 in \cite{HMMNWW}. 
Therefore 
\[
R^{(m)}/J^{(m)} \cong B=R^{(m)}/(JR^{(m)}:L_1^{d_1}L_2^{d_2} \cdots L_m^{d_m})
\]
has the SLP by induction. 
Thus, since $A \cong B \otimes_{K^{(m)}} K$, 
we have that $A$ has the SLP by Remark~\ref{Re of JD}. 
\end{proof}

\begin{proof}[Proof of Theorem~\ref{th2}] 
(1) Let $\Gp$ be a prime ideal of $R$ containing $I$. 
Then it follows that $\Gp \supset \Ga=(L_{1j_1}, L_{2j_2},\ldots,L_{nj_n})$ 
for some $j_1,j_2,\ldots,j_n$. 
Let $M=[m_{ik}]$ be the $n\times n$ matrix 
consisting of coefficients of $L_{1j_1},L_{2j_2},\ldots,L_{nj_n}$, 
that is, $m_{ik}=\xi_k^{(i,j_i)}$ is the coefficient of $x_k$ in $L_{ij_i}$. 
Noting that 
the elements $\{\xi_k^{(i,j_i)}\}$ are algebraically independent over $\Q$, 
we have $\det M\neq 0$, 
hence $\Gp=\Ga$ is a maximal ideal of $R$. 
This shows that $I$ is an Artinian complete intersection ideal of $R$. 

(2) 
Let $z$ be the image of $L_{11}$ in $A$. 
By Theorem~\ref{csm}, 
it suffices to prove that all the central simple modules of $(A,z)$ have the SLP. 
We use induction on the number $n$ of variables. 
The first case $n=1$ is obvious. 

So we consider the case $n>1$. 
Here we use induction on the number $m_1$ of linear forms $L_{1j}$. 
Let $m_1=1$. 
Then it follows by Lemma 6.1 in \cite{HW1} that 
$(A,z)$ has only one central simple module which is isomorphic to $A/zA$. 
We want to show that $A/zA$ has the SLP. 
Put 
\[
\overline{L}_{ij}=L_{ij}-\xi_1^{(i,j)}x_1-
\frac{\xi_1^{(i,j)}}{\xi_1^{(1,1)}}\sum_{k=2}^n\xi_k^{(1,1)}x_k
=\sum_{k=2}^n
(\xi_k^{(i,j)}-\frac{\xi_1^{(i,j)}\xi_k^{(1,1)}}{\xi_1^{(1,1)}})x_k 
\] 
for all $(i,j)\neq (1,1)$, 
and note that the coefficients 
\[
\Xi_1=\{ \xi_k^{(i,j)}-\frac{\xi_1^{(i,j)}\xi_k^{(1,1)}}{\xi_1^{(1,1)}} \mid 
2\leq i \leq n, 1 \leq j \leq m_i, 2\leq k\leq n \}
\] 
of $\overline{L}_{ij}$ $(2\leq i\leq n, 1\leq j\leq m_i)$ 
are algebraically independent over $\Q$. 
Hence, since 
\[
A/zA \cong \overline{A} = K[x_2,x_3,\ldots,x_n]/
(\prod_{j=1}^{m_2}\overline{L}_{2j}^{d_{2j}}, \ldots, \prod_{j=1}^{m_n}\overline{L}_{nj}^{d_{nj}}) 
\] 
and $\overline{A}$ has the SLP by the induction hypothesis on $n$, 
we have that $A/zA$ has the SLP.

Let $m_1>1$. 
Let $U_1,U_2,\ldots,U_s$ be all the central simple modules of $(A,z)$. 
Since $(I:L_{11}^i)+(L_{11})=I+(L_{11})$ for all $i < d_{11}$ 
and $(I:L_{11}^{d_{11}})+(L_{11})=I+(\prod_{j=2}^{m_1}L_{1j}^{d_{1j}}, L_{11})$, 
we have that the last central simple module $U_s$ is 
a principal ideal generated 
by the image of $\prod_{j=2}^{m_1}L_{1j}^{d_{1j}}$ in $\overline{A}$. 
Let $l_{1j}$ be the image of $\overline{L}_{1j}$ in $\overline{A}$ for all $2\leq j\leq m_1$, 
and let $K_1$ be the subfield of $K$ generated by the set $\Xi_1$ defined above. 
Note that the coefficients 
\[
\Xi_2=\{ \xi_k^{(1,j)}-\frac{\xi_1^{(1,j)}\xi_k^{(1,1)}}{\xi_1^{(1,1)}} 
\mid 2\leq j \leq m_1, 2\leq k\leq n \}
\] 
of $\overline{L}_{1j}$ $(2\leq j\leq m_1)$ are algebraically independent over $K_1$. 
Hence, since 
\[
U_s \cong \overline{A}/(0:\prod_{j=2}^{m_1} l_{1j}^{d_{1j}})
\cong K[x_2,\ldots,x_n]/
((\prod_{j=1}^{m_2}\overline{L}_{2j}^{d_{2j}}, \ldots, \prod_{j=1}^{m_n}\overline{L}_{nj}^{d_{nj}})
:\prod_{j=2}^{m_1}\overline{L}_{1j}^{d_{1j}}), 
\]
it follows from Lemma~\ref{second key lemma} that $U_s$ has the SLP

We note that 
it is possible to regard $U_1,\ldots,U_{s-1}$ 
as the full set of the central simple modules of 
$(B=R/I+(\prod_{j=2}^{m_1} L_{1j}^{d_{1j}}), \overline{z})$, 
where $\overline{z}$ is the image of $z$ in $B$. 
Since 
\[
B \cong 
K[x_1, x_2, \ldots, x_n]/( \prod_{j=2}^{m_1}L_{1j}^{d_{1j}}, \prod_{j=1}^{m_2}L_{2j}^{d_{2j}}, \ldots, \prod_{j=1}^{m_n}L_{nj}^{d_{nj}} ), 
\]
we have that $B$ has the SLP by the induction hypothesis on $m_1$. 
Let $K_2$ be the subfield of $K$ 
generated by the set $\Xi_1 \cup \Xi_2$, 
and note that 
the coefficients $\{\xi_1^{(1,1)}, \xi_2^{(1,1)},\ldots,\xi_n^{(1,1)}\}$ 
of $L_{11}$ are algebraically independent over $K_2$.  
Then $\overline{z}$ is an SL element of $B$ by Lemma~\ref{key lemma}, 
and hence $U_1,\ldots,U_{s-1}$ have the SLP 
by the proof (i) $\Rightarrow$ (ii) of Theorem 1.2 in \cite{HW1}. 
This completes the proof. 
\end{proof}



%

\end{document}